\documentclass[11pt]{article}
\usepackage[T1]{fontenc}
\usepackage{amsmath,amsthm,mathtools}
\usepackage{libertine}
\usepackage[libertine]{newtxmath}
\usepackage{microtype}
\usepackage[letterpaper,margin=1in]{geometry}
\usepackage[colorlinks=true,linkcolor=blue,citecolor=blue,urlcolor=blue]{hyperref}
\usepackage[capitalise,nameinlink,noabbrev]{cleveref}
\Crefformat{equation}{#2(#1)#3}
\Crefrangeformat{equation}{#3(#1)#4--#5(#2)#6}
\Crefmultiformat{equation}{#2(#1)#3}{ and #2(#1)#3}{, #2(#1)#3}{, and #2(#1)#3}

\newtheorem{theorem}{Theorem}
\newtheorem{lemma}{Lemma}[section]
\theoremstyle{remark}
\newtheorem*{remark}{Remark}
\newcommand{\F}{\mathcal F}
\newcommand{\Sfam}{\mathcal S}
\title{A Bound Below 2.8 for Tuza's Conjecture}
\author{Sichen Wang\\ \small Shenzhen MSU-BIT University\\ \small\texttt{wsc@smbu.edu.cn}}
\date{}
\hypersetup{pdftitle={A Bound Below 2.8 for Tuza's Conjecture},pdfauthor={Sichen Wang}}

\begin{document}
\maketitle
\begin{abstract}
Let $\nu(G)$ be the maximum number of edge-disjoint triangles in a graph $G$ and $\tau(G)$ the minimum number of edges meeting every triangle. Tuza conjectured that $\tau(G)\le2\nu(G)$. We prove that $\tau(G)\le(165/59)\,\nu(G)$. The constant $165/59\approx2.797$ improves the bound $66/23\approx2.870$ that Haxell proved in 1999. The key observation is that, for a suitable red-blue coloring, the families left over in Haxell's construction contain every triangle with exactly one red edge. Such a family $\F$ admits an exchange that forces certain red edges to lie in a single triangle once the blue edges of a maximum packing are deleted, which gives $\tau(\F)\le(8/3)\,\nu(\F)$.
\end{abstract}

\section{Introduction}

A triangle packing in a graph $G$ is a set of pairwise edge-disjoint triangles, and a triangle transversal is a set of edges meeting every triangle. Let $\nu(G)$ and $\tau(G)$ denote their maximum and minimum sizes. A transversal contains an edge of every packed triangle, and the edges of a maximum packing form a transversal, so $\nu(G)\le\tau(G)\le3\nu(G)$. Tuza~\cite{Tuza} conjectured that $\tau(G)\le2\nu(G)$ for every graph $G$. The constant $2$ would be best possible, as $K_4$ and $K_5$ show.

Haxell~\cite{Haxell} proved $\tau(G)\le(66/23)\,\nu(G)$, where $66/23\approx2.870$, and noted without proof that the argument can be pushed to $(1+\sqrt{481})/8\approx2.866$. Very recently, Yi~\cite{Yi} lowered the constant to $(162+4\sqrt3)/59\approx2.863$ by proving $\tau\le(1+\sqrt3)\nu$ for every family of triangles that admits a red-blue edge coloring with one red edge per triangle, and observed that this route alone cannot bring the constant below $54/19\approx2.842$. Independently of Yi's work, we prove the following bound, whose constant $165/59\approx2.797$ crosses this barrier and lies below $2.8$.

\begin{theorem}\label{thm:main}
Every finite simple graph $G$ satisfies
\[
 \tau(G)\le\frac{165}{59}\nu(G).
\]
\end{theorem}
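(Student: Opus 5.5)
The plan is to follow Haxell's framework and replace one ingredient. Haxell's argument fixes a maximum packing $\mathcal P$ of size $\nu=\nu(G)$. It classifies packed triangles by how many other triangles hang off each of their edges. It then produces a red-blue coloring of edges together with a transversal built from two parts. The first part consists of a set of edges of $\mathcal P$ chosen deterministically or by averaging. The second part covers certain leftover families of triangles. The final constant comes from optimizing a linear combination of these contributions against the parameters of the packing.

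I will reorganize that construction so the leftover families are stated as families $\F$ of triangles with a red-blue coloring. In this setting every triangle has exactly one red edge, and the abstract's observation says that $\F$ contains \emph{every} triangle of the underlying graph with exactly one red edge. The first key step is to verify this closure property for a suitably chosen coloring. I expect to choose it so that the red edges are those of packed triangles not selected into the transversal, with blue edges elsewhere.

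The second step is the lemma $\tau(\F)\le\frac83\nu(\F)$ for such closed families. I will fix a maximum packing $\mathcal Q$ of $\F$ and delete its blue edges, which costs $2|\mathcal Q|$ edges. The task is then to cover the remaining triangles, each of which uses at most one red edge of $\mathcal Q$, with an additional $\frac23|\mathcal Q|$ edges. The exchange argument runs as follows. Suppose a red edge $e$ of $Q\in\mathcal Q$ lay in two triangles of $\F$ that avoid the blue edges of $\mathcal Q$. Using closure under one-red-edge triangles, I will replace $Q$ by two disjoint triangles, or perform a swap that increases $|\mathcal Q|$, which contradicts maximality. Hence every such red edge lies in a single surviving triangle.

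After that, I will pick one edge per surviving triangle, grouped by packed triangles, and average over choices. I might instead select, for each $Q$, a cheapest option among "delete all three edges" and "delete blue edges plus one extra". The aim is a per-triangle average of at most $8/3$. I expect this exchange lemma to be the main obstacle. The difficulty lies in making sure the replacement triangles are edge-disjoint from the rest of $\mathcal Q$ and genuinely lie in $\F$, which is exactly where the closure property must be used.

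Finally, I will plug $\frac83$ in place of Yi's $1+\sqrt3$ (or Haxell's weaker bound) into Haxell's accounting. This yields an inequality of the form $\tau\le\max$ over a few linear expressions in the class proportions. I will then solve the resulting small linear program, balancing the cases by choosing the threshold parameter, to get the optimum $165/59$. This is a routine calculation once the lemma is in hand.
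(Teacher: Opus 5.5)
Your plan has genuine gaps in both the colored lemma and the final accounting.

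\textbf{The colored lemma.} The exchange does not make every red edge of $\mathcal Q$ lie in a single surviving triangle. Suppose $uv$ is red, $ux,vx,uy,vy,uz,vz$ are blue, and there are no other edges. Then $\{uvx\}$ is a maximum packing, and both $uvy$ and $uvz$ survive. Privacy follows only when the opposite edge $xy$ exists, is red, and lies outside $R(\mathcal Q)$; that is exactly the hypothesis of \Cref{lem:unique}.

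More seriously, no scheme of the form ``all blue edges of $\mathcal Q$ plus $\frac23|\mathcal Q|$ further edges'' can work. Take disjoint two-page books $\{uvx,uvy\}$ with red spine $uv$, all other edges blue, and $xy$ absent. Then $\tau(\F)=\nu(\F)=q$. But for every maximum packing, the unused page of each book survives the deletion of $B(\mathcal Q)$. So any transversal containing $B(\mathcal Q)$ has at least $3q$ edges, even though every spine is private among the survivors. Both of your fallback options also cost $3$ per packed triangle.

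The missing idea is to average two different transversals. Write $P$ for your $\mathcal Q$, let $Q$ be a maximum packing of $\F_P$, and let $r$ count the members of $Q$ with a private red edge.
\begin{itemize}
\item The first transversal applies \Cref{lem:books} to the books obtained by pairing each member of $Q$ with the member of $P$ sharing its red edge. It takes all edges of unpaired members of $P$, the $q$ spines, and the opposite edges of only the $r$ books where \Cref{lem:unique} applies. This gives at most $3p-2q+r$ edges, and it does not contain $B(P)$.
\item The second transversal is $B(P)\cup E(Q)$ minus the $r$ private red edges, with $2p+3q-r$ edges.
\end{itemize}
Twice the first plus the second is at most $8p$, since $r\le q$.

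\textbf{The final accounting.} This step is not a routine substitution, and as planned it cannot reach $165/59$. As the introduction recalls, Yi observed that improving the bound for one-red-edge families alone cannot bring the constant below $54/19\approx2.842$. The paper keeps Haxell's first transversal \eqref{eq:I1} but builds three new ones:
\begin{itemize}
\item \eqref{eq:I2} combines the books formed by $B^*$ and $Q$ with a random bipartition, and uses \Cref{lem:unique} to bound the added edges outside $N$ by $\frac38h$.
\item \eqref{eq:I3} comes from three-page books built with a further packing $Q_1$.
\item \eqref{eq:I4} needs the refined bound \eqref{eq:colored-refined} rather than $\frac83\nu$. That bound is obtained by applying the $\frac83$ bound to $\F_P$ itself and combining it with the book transversal.
\end{itemize}
The refined bound's $-\frac14h$ term cancels the $+\frac38h$ of \eqref{eq:I2} under weights $12$ and $8$. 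Its $\frac32d$ term is absorbed by the $-d$ of \eqref{eq:I3}. Even with \eqref{eq:I1}--\eqref{eq:I3} available, using only \eqref{eq:colored-bound} in the fourth transversal yields at best $45/16>2.8$.

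\textbf{The coloring.} The paper's coloring is the reverse of your guess. In $G-E(A)$, the edges outside the maximum packing of $G$ are red and its edges are blue. So the leftover family consists of the type-2 triangles, and closure holds by definition.
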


Our starting point is the following exchange property. Let the edges of a graph be colored red and blue, and consider the family of all triangles with exactly one red edge. Fix a maximum packing in this family and delete the blue edges of its triangles. By maximality, every triangle of the family that survives meets the packing only in its red edge. Suppose now that a surviving triangle $uvy$ contains the red edge $uv$ of a packed triangle $uvx$, and that $xy$ is a red edge not used by the packing. Then $uvy$ is the only surviving triangle containing $uv$. Indeed, $uxy$ has exactly one red edge, and the family contains every such triangle. So if $uvz$ were a second surviving triangle containing $uv$, then replacing $uvx$ by $uxy$ and $uvz$ would enlarge the packing (\Cref{lem:unique}). A red edge that lies in a single surviving triangle can be omitted from any transversal containing the other two edges of that triangle. Two transversals of the family, built in \Cref{sec:colored}, allow this omission. Combining the two resulting bounds gives $\tau\le(8/3)\nu$ for every family of this kind, and every packed triangle whose red edge lies in no other triangle of the family lowers the bound by a further $2/3$ (\Cref{lem:colored}).

Families of this kind arise in Haxell's construction. The construction fixes a maximum packing $P$, calls the edges of its triangles old, and packs the remaining triangles in two further rounds: a maximum packing $A$ of triangles with one old edge, and, after the edges of $A$ are deleted, a packing $P'$ with as many triangles with two old edges as possible. Color new edges red and old edges blue. After the deletion of the edges of $A$, the triangles with two old edges form a family of the above kind, and so do the triangles that survive the further deletion of the old edges of $P'$. We build four transversals of $G$ from these packings, using books of triangles with a common edge, the bounds of \Cref{lem:colored}, and a random bipartition of the vertex set. Each gives an inequality between $\tau(G)$, $\nu(G)$, and the sizes of the packings, and a linear combination of the four inequalities yields $165/59$. \Cref{sec:colored} proves the exchange property and \Cref{lem:colored}, and \Cref{sec:general} carries out the construction. The complete proof has been formalized in Lean 4 with Mathlib.\footnote{\url{https://github.com/sichen-wang/tuza-lean}}

\section{Colored Triangle Families}\label{sec:colored}

Throughout, graphs are finite and simple, a triangle is identified with its set of three edges, and two triangles meet if they share an edge. For a family of triangles, a packing is a set of pairwise edge-disjoint members, a transversal is a set of edges meeting every member, and a packing is maximal if no member of the family can be added to it. Maximum packings are maximal. For a set $P$ of triangles, $E(P)$ is the union of their edge sets, and $H-X$ is the graph $H$ with the edge set $X$ deleted.

This section proves the three lemmas used in \Cref{sec:general}: the exchange property (\Cref{lem:unique}), a covering lemma for books (\Cref{lem:books}), and the two colored bounds (\Cref{lem:colored}). Let $H$ be a graph whose edges are colored red and blue, and let $\F(H)$ be the family of all triangles of $H$ with exactly one red edge. We write $\F$ for $\F(H)$ when $H$ is clear. For a packing $P$ in $\F(H)$, let $R(P)$ and $B(P)$ be the sets of red and blue edges of its members, and let
\[
 \F_P=\F(H-B(P))
\]
be the family of triangles with one red edge that survive the deletion of the blue edges of $P$. If $P$ is maximum, every member of $\F_P$ has its red edge in $R(P)$, since otherwise it could be added to $P$. Hence it meets $E(P)$ exactly in its red edge, and it meets exactly one member of $P$. We say that an edge is \emph{private} in a family of triangles if exactly one member of the family contains it.

\begin{lemma}\label{lem:unique}
Let $P$ be a maximum packing in $\F(H)$, let $uvx\in P$ have red edge $uv$, and let $uvy\in\F_P$. If $xy$ is a red edge outside $R(P)$, then $uv$ is private in $\F_P$.
\end{lemma}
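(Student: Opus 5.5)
The plan is to argue by contradiction through an explicit exchange. Suppose $uv$ is not private in $\F_P$. Since $uvy\in\F_P$ already contains $uv$, there is a second member $uvz\in\F_P$ with $z\ne y$. I will show that
\[
 P'=(P\setminus\{uvx\})\cup\{uxy,\,uvz\}
\]
is a packing in $\F(H)$ with $|P'|=|P|+1$. This contradicts the maximality of $P$.

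First I record the colors. Since $uvx\in P\subseteq\F(H)$ has red edge $uv$, the edges $ux$ and $vx$ are blue and lie in $B(P)$. Since $uvy\in\F_P$ has red edge $uv$, the edges $uy$ and $vy$ are blue and avoid $B(P)$. Because $uy\notin B(P)$ while $ux\in B(P)$, we get $y\ne x$. Similarly, $uz$ and $vz$ are blue and avoid $B(P)$, so $z\ne x$.

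Next I check that $uxy$ belongs to $\F(H)$. Its three edges $ux$, $xy$, $uy$ all exist in $H$, and they have colors blue, red, blue respectively. So $uxy$ has exactly one red edge.

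Then I verify that $P'$ is a packing.

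\emph{The two new triangles are edge-disjoint.} The edges of $uxy$ are $ux,xy,uy$ and those of $uvz$ are $uv,uz,vz$. We have $ux\ne uz$ because $x\ne z$, and $uy\ne uz$ because $y\ne z$. The edge $uv$ is red and in $R(P)$, while $xy$ is outside $R(P)$, so $uv\ne xy$; the edge $uv$ also differs from the blue edges $ux$ and $uy$. Finally, $vz$ cannot equal $ux$, $uy$ or $xy$, since $u\notin\{v,z\}$ and $v\notin\{x,y\}$.

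\emph{The new triangles avoid the edges of $P\setminus\{uvx\}$.} The edge $ux$ lies only in $uvx$, because $P$ is a packing. The edge $xy$ is red and outside $R(P)$, so it is not in $E(P)$. The edges $uy$, $uz$, $vz$ are blue and not in $B(P)$, so they are not in $E(P)$. The edge $uv$ lies only in the removed triangle $uvx$.

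The only delicate step is this case analysis of edge coincidences, and in particular the facts $x\ne y$ and $x\ne z$. Both come from the deletion of $B(P)$, so the main care is to use that $\F_P$ is defined in $H-B(P)$.
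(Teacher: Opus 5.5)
Your proof is correct and uses the same exchange as the paper: replace $uvx$ by $uxy$ and $uvz$, where $z\ne x$ because $uz\notin B(P)$, and the two new triangles meet $E(P)$ only in $ux$ and $uv$, respectively. In your check of edge coincidences you skip the pair $xy,uz$; they differ because $xy$ is red and $uz$ is blue (or because $u\notin\{x,y\}$).
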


\begin{proof}
Suppose that $uvz\in\F_P$ for some $z\ne y$. Since $uz\notin B(P)$ while $ux\in B(P)$, also $z\ne x$. The triangle $uxy$ has blue edges $ux,uy$ and red edge $xy$, so it belongs to $\F(H)$, and it meets $E(P)$ only in $ux$, because $uy\notin B(P)$ and $xy\notin R(P)$. The triangle $uvz$ meets $E(P)$ only in $uv$. As $z\notin\{x,y\}$, the triangles $uxy$ and $uvz$ are edge-disjoint, so replacing $uvx$ by both of them gives a packing in $\F(H)$ larger than $P$.
\end{proof}

The transversals below are built from books. A \emph{book} with \emph{spine} $uv$ is a set of triangles containing the edge $uv$. Its members are its \emph{pages}, and the union of their edge sets is its \emph{support}. If a book has exactly two pages $uvx$ and $uvy$ and $xy$ is an edge, then $xy$ is the \emph{opposite edge} of the book.

\begin{lemma}\label{lem:books}
Let $\mathcal B$ be a set of books with at most three pages each and pairwise edge-disjoint supports, and let $\mathcal T$ be a family of triangles such that every selection of one page from each book is a maximal packing in $\mathcal T$. Let $C$ consist of all edges of the one-page books and the spines of the other books. Then every member of $\mathcal T$ avoiding $C$ is $uxy$ or $vxy$ for some two-page book $\{uvx,uvy\}$ in $\mathcal B$. Consequently, $C$ together with the opposite edges of the two-page books is a transversal of $\mathcal T$ of size at most $\sum_{B\in\mathcal B}(4-|B|)$.
\end{lemma}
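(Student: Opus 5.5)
Take any $T\in\mathcal T$ that avoids $C$. By hypothesis every choice of one page from each book is a maximal packing in $\mathcal T$. So $T$ shares an edge with some page of every selection. The approach is to pick pages adversarially and force $T$ into the stated shape.

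First, suppose $T$ shared no edge with the support of any book. Then any selection together with $T$ would still be a packing, contradicting maximality. So $T$ meets the support of some book $B$, and it does so in an edge $e$ that is not in $C$. Since the supports are pairwise edge-disjoint, $e$ lies in the support of exactly one book.

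Second, we show $T$ has an edge in every book's support. Suppose not, say $T$ misses the support of $B'$ entirely. In every other book, choose a page avoiding all edges of $T$ where possible; the obstruction must come from a book in which every page meets $T$. Rather than track the general case, I would handle it as follows. Because $T$ avoids $C$, it avoids the spines, so $T$ meets a page $uvw$ of a book with spine $uv$ only in $uw$ or $vw$. Two distinct pages $uvw$ and $uvw'$ share only the spine, so a single edge of $T$ lies in at most one page of a given book. $T$ has three edges, so it can meet at most three pages in total across all books. The plan is therefore to choose, in each book, a page that $T$ does not meet; if that works in every book, we contradict maximality.

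Such a page exists unless $T$ meets every page of the book. One-page books are in $C$, so $T$ meets none of their pages. A three-page book whose pages are all hit would use all three edges of $T$, one in each page $uvw_i$, each edge containing $u$ or $v$. But $T$ does not contain the edge $uv$, so it cannot have two edges through $u$ and one through $v$ together with... I would check this by case analysis. The non-spine page edges $uw_i$ and $vw_i$ all share an endpoint with the spine, and a triangle with three edges each incident to $u$ or $v$ but not containing $uv$ is impossible: some vertex of $T$ would have to be both $u$ and $v$. So three-page books always have an unhit page.

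This leaves two-page books $\{uvx,uvy\}$ in which $T$ meets both pages. Then $T$ uses an edge from $\{ux,vx\}$ and one from $\{uy,vy\}$, and these two edges share a vertex, which must be $u$ or $v$. Hence $T=uxy$ or $T=vxy$. This proves the first claim.

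For the transversal, every such $T$ contains the opposite edge $xy$. The count is immediate: a one-page book contributes $3$ edges, a two-page book contributes $2$ (its spine and opposite edge), and a three-page book contributes $1$ (its spine). In each case this is at most $4-|B|$.

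The main obstacle is the three-page case, which is where the page-choice argument needs care.
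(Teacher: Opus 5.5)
Your proof is correct and follows the paper's argument: choose in each book a page edge-disjoint from $T$, conclude by maximality that some book has all its pages met by $T$, rule out one-page books (their edges lie in $C$) and three-page books, and read off $T=uxy$ or $T=vxy$ in the two-page case, with at most $4-|B|$ edges per book. Your three-page exclusion (all three edges of $T$ would meet $\{u,v\}$, which forces $u,v\in T$ and hence $uv\in T$) differs only cosmetically from the paper's (three distinct third vertices plus $u$ or $v$ would give four vertices); you should delete the announced claim that $T$ has an edge in every book's support, which is false (e.g.\ $T=uxy$ need not meet any other book) and never used, along with the unfinished sentence and the garbled ``some vertex would be both $u$ and $v$'' phrasing.
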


\begin{proof}
Let $T\in\mathcal T$ avoid $C$. If every book had a page edge-disjoint from $T$, choosing such a page in each book would give a maximal packing in $\mathcal T$ to which $T$ could still be added, a contradiction. So $T$ meets every page of some book $B$. Since all edges of a one-page book lie in $C$, $B$ has two or three pages, and $T$ avoids its spine $uv$, so $T$ contains $uz$ or $vz$ for every page $uvz$ of $B$. If $B$ had three pages, these would be three edges of $T$ with distinct ends outside $\{u,v\}$, but a triangle has only three vertices. So $B=\{uvx,uvy\}$, and $T$ contains one of $ux,vx$ and one of $uy,vy$. Two edges of a triangle share a vertex, so these are $ux,uy$ or $vx,vy$, and $T$ is $uxy$ or $vxy$. Finally, a book with $k$ pages contributes at most $4-k$ edges to the transversal.
\end{proof}

\begin{lemma}\label{lem:colored}
Let $P$ be a maximum packing in $\F(H)$ and $Q$ a maximum packing in $\F_P$, with $p=|P|$ and $q=|Q|$, and let $u$ be the number of members of $P$ whose red edge is private in $\F(H)$. Then
\begin{align}
 \tau(\F(H))&\le\tfrac83p-\tfrac23u,\label{eq:colored-bound}\\
 \tau(\F(H))&\le\tfrac94p+\tfrac32q-\tfrac14u.\label{eq:colored-refined}
\end{align}
\end{lemma}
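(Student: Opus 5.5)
The plan is to build a few explicit transversals of $\F(H)$ from $P$, $Q$ and the exchange property, and then average them with suitable weights. Two facts recorded before \Cref{lem:unique} are used throughout. Every member of $\F(H)$ either contains an edge of $B(P)$ or lies in $\F_P$, and every member of $\F_P$ meets $E(P)$ exactly in its red edge. Consequently $B(P)$ together with any transversal of $\F_P$ is a transversal of $\F(H)$.

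\textbf{Transversal (i).} Take $E(P)$ and drop the red edge $uv$ of every $uvx\in P$ whose red edge is private in $\F(H)$. The only member of $\F(H)$ containing such a $uv$ is $uvx$, and that triangle is still met by $ux$. This gives
\[\tau(\F(H))\le 3p-u.\]

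\textbf{Transversal (ii), via \Cref{lem:books} with $\mathcal T=\F(H)$.} Each member of $Q$ contains exactly one edge of $R(P)$, and two members of $Q$ cannot share that edge. So for each $uvx\in P$ form the book with spine $uv$ consisting of $uvx$ together with the member $uvy\in Q$ on $uv$, if there is one. These books have at most two pages and pairwise edge-disjoint supports. Replacing any set of the $uvx$ by the corresponding $uvy$ keeps the packing in $\F(H)$ of size $p$, hence maximum and maximal, so the hypotheses of \Cref{lem:books} hold. The lemma gives a transversal of size at most $3(p-q)+2q=3p-q$. For a book with one page whose red edge is private in $\F(H)$, I will drop $uv$ as in (i).

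\textbf{Transversal (iii), and the main obstacle.} This is the step that needs the exchange. Take $B(P)$ together with a transversal of $\F_P$ that uses the red edges $R(P)$ only where necessary. For a packed triangle $uvx$ with red edge $uv$, I aim to cover the pages of $\F_P$ on spine $uv$ by one of two options:
\begin{itemize}
\item $uv$ itself;
\item the two non-spine edges of the unique page, whenever $uv$ is private in $\F_P$.
\end{itemize}
\Cref{lem:unique} is what makes the second option available. Whenever a page $uvy$ has a red edge $xy$ outside $R(P)$, the spine is private in $\F_P$, so $uv$ can be omitted at the cost of $uy,vy$. I expect to randomize, per packed triangle, which of the edges $ux,vx,uv$ or page edges to include, so that the expected size becomes $\tfrac52p-\tfrac12u$. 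The weighted average $\tfrac13\cdot\text{(i)}+\tfrac23\cdot\text{(iii)}$ then yields \eqref{eq:colored-bound}.

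For \eqref{eq:colored-refined}, I will instead average (ii), (iii) and the bound $2p+3q$. The last comes from $B(P)$ plus the edges of a maximal packing in $\F_P$ of size at most $q$, all three edges of each of its triangles. The weights are to be chosen so that the coefficients of $p$, $q$ and $u$ come out as $\tfrac94$, $\tfrac32$ and $-\tfrac14$; I would solve this small linear system once the exact form of (iii) is fixed.

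The hardest step is (iii): designing the cover so that each packed triangle costs $\tfrac52$ on average. The danger is the case where the spine $uv$ has several pages in $\F_P$ and none of them is forced to be private. There I would first try to show that each such page $uvy$ forces $xy$ to be blue or to lie in $R(P)$. I would then use $xy$, or the packed triangle containing it, to charge the extra cost to another member of $P$, which is where the $-\tfrac23u$ savings would enter.
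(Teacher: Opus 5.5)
\textbf{Genuine gap: transversal (iii) is never constructed.} The whole argument rests on (iii), and you give no construction for it. The two options you list produce no saving on their own: paying separately for the two non-spine edges of a private page costs more than the spine. The target is also miscalibrated. Since $u\le p$, one has $\frac52p-\frac12u\le\frac83p-\frac23u$, so (iii) alone would already imply \eqref{eq:colored-bound}, and averaging with (i) does nothing. Likewise, the only admissible weights in your plan for \eqref{eq:colored-refined} are $0$ on (ii) and $\frac12$ on each of (iii) and $2p+3q$. So the proposal amounts to asserting a ratio of $\frac52$ for these families, which is stronger than the lemma, and you give no argument for it.

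\textbf{The missing idea.} Apply \Cref{lem:unique} inside your transversal (ii), not in a new one. By \Cref{lem:books}, a member $T$ of $\F$ avoiding $C$ is $uxy$ or $vxy$ for a two-page book $\{uvx,uvy\}$. The edges $T$ shares with the pages are blue, so its red edge is the opposite edge $xy$, and $xy\notin R(P)\subseteq C$. \Cref{lem:unique} then makes $uv$ private in $\F_P$; this is exactly your closing observation, read contrapositively. So only books whose $Q$-page has a spine private in $\F_P$ need their opposite edge.

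Let $r$ be the number of such members of $Q$. Then (ii) sharpens to $\tau(\F)\le3(p-q)+q+r-u$. The $u$ members of $P$ with a red edge private in $\F$ are automatically unpaired, so dropping their red edges is legitimate.

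Your framework for (iii), namely $B(P)$ plus a transversal of $\F_P$, is the right complement. The transversal of $\F_P$ should be $E(Q)$ minus those $r$ private spines. A member of $\F_P$ avoiding it must, by maximality of $Q$, contain a deleted spine, so it is that member of $Q$, which its blue edges still meet. This gives $\tau(\F)\le2p+3q-r$. Twice the first bound plus the second gives $3\tau(\F)\le8p-q+r-2u\le8p-2u$, which is \eqref{eq:colored-bound}.

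\textbf{Recursion for \eqref{eq:colored-refined}.} $\F_P=\F(H-B(P))$ is again the family of all one-red-edge triangles of a colored graph, and $Q$ is maximum in it. So \eqref{eq:colored-bound} gives $\tau(\F_P)\le\frac83q-\frac23r$, hence $\tau(\F)\le2p+\frac83q-\frac23r$. Three times this plus the sharpened (ii) gives $4\tau(\F)\le9p+6q-r-u\le9p+6q-u$.
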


\begin{proof}
Let $r$ be the number of members of $Q$ whose red edge is private in $\F_P$. We build two transversals of $\F$, which give
\[
 \tau(\F)\le3p-2q+r-u
 \qquad\text{and}\qquad
 \tau(\F)\le2p+3q-r,
\]
and then combine these bounds, once directly and once after applying \Cref{eq:colored-bound} to $\F_P$.

The first transversal comes from books. Each member of $Q$ shares its red edge with exactly one member of $P$, and the two form a book with that red edge as spine. Each unpaired member of $P$ forms a one-page book. No member of $P$ is paired twice, as $Q$ is a packing, and the supports are edge-disjoint, as the blue edges of $Q$ avoid $B(P)$. Every selection of one page per book is a packing in $\F$ of size $p=\nu(\F)$, hence maximal. A member of $P$ whose red edge is private in $\F$ is unpaired, since a page of $Q$ on the same spine would be a second member of $\F$ containing that edge.

Let $C$ consist of the $3(p-q)$ edges of the one-page books and the $q$ spines. By \Cref{lem:books}, a member of $\F$ avoiding $C$ is $uxy$ or $vxy$ for a two-page book $\{uvx,uvy\}$ with $uvx\in P$ and $uvy\in Q$. The two edges it shares with the pages are blue, so $xy$ is red, and $xy\notin R(P)$ since $R(P)\subseteq C$. By \Cref{lem:unique}, $uv$ is private in $\F_P$, so $uvy$ is one of the $r$ members of $Q$ with a private red edge. Adding the opposite edges of these at most $r$ books to $C$ therefore gives a transversal of $\F$. Finally, delete the red edges of the $u$ unpaired members of $P$ whose red edge is private in $\F$: the only member of $\F$ containing such an edge is that member of $P$, which is met by its blue edges in $C$. The transversal now has at most $3(p-q)+q+r-u$ edges, which is the first bound.

The second transversal starts from $B(P)\cup E(Q)$. A member of $\F$ avoiding $B(P)$ lies in $\F_P$ and, as $Q$ is maximal there, meets $E(Q)$. So $B(P)\cup E(Q)$ is a transversal of $\F$. Delete the red edges of the $r$ members of $Q$ whose red edge is private in $\F_P$: a member of $\F$ avoiding the remaining edges lies in $\F_P$ and contains a deleted edge, so it is the corresponding member of $Q$, which is met by its blue edges. The remaining $2p+3q-r$ edges give the second bound.

Twice the first bound plus the second gives $3\tau(\F)\le8p-q+r-2u\le8p-2u$, as $r\le q$, which is \Cref{eq:colored-bound}. Next, $Q$ is a maximum packing in $\F(H-B(P))=\F_P$, and $r$ of its members have a red edge that is private in $\F_P$, so \Cref{eq:colored-bound} for the graph $H-B(P)$ gives $\tau(\F_P)\le\tfrac83q-\tfrac23r$. Since $B(P)$ meets every member of $\F\setminus\F_P$,
\[
 \tau(\F)\le2p+\tfrac83q-\tfrac23r.
\]
Three times this bound plus the first bound gives $4\tau(\F)\le9p+6q-u-r\le9p+6q-u$, which is \Cref{eq:colored-refined}.
\end{proof}

\section{The General Bound}\label{sec:general}

\begin{proof}[Proof of \Cref{thm:main}]
We follow Haxell's construction~\cite{Haxell}: the packings below and the first transversal are taken from it, and the other three transversals rest on the lemmas of \Cref{sec:colored}. Choose a maximum triangle packing $P$ in $G$, and put $n=|P|=\nu(G)$ and $O=E(P)$. Call edges in $O$ old and all other edges new. A triangle has type $j$ if it contains exactly $j$ old edges. By maximality of $P$ there are no type-0 triangles.

Choose a maximum packing $A$ of type-1 triangles and put $a=|A|$. The first transversal comes from books formed by $P$ and $A$. Pair each member of $A$ with the member of $P$ containing its old edge. If two members of $A$ met the same member of $P$, replacing it by both would enlarge $P$, since a type-1 triangle meets $E(P)$ only in its old edge. These pairs and the remaining members of $P$ form $n$ books with $n+a$ pages. Their supports are edge-disjoint, since the new edges of $A$ avoid $O$, and every selection of one page per book is a packing of size $n=\nu(G)$, hence maximal. \Cref{lem:books} gives a transversal with at most $3(n-a)+2a$ edges, that is,
\begin{equation}
 \tau(G)\le3n-a.\label{eq:I1}
\end{equation}

Put $G'=G-E(A)$ and $O'=O\setminus E(A)$. Every type-1 triangle meets $E(A)$ by maximality of $A$, so every triangle of $G'$ has type $2$ or $3$, and $|O'|=3n-a$. Each remaining transversal of $G$ consists of $E(A)$ and a transversal of $G'$. Color the new edges of $G'$ red and its old edges blue. Then $\F(G')$ is the family $\F_2$ of type-2 triangles of $G'$.

Choose a packing $P'$ in $G'$ that maximizes first its number of type-2 members and then its size $m$. Let $B^*$ be the set of type-2 members of $P'$ and put $b=|B^*|$. Any packing in $\F_2$ is a packing in $G'$, so $B^*$ is a maximum packing in $\F_2$. The new edges of $P'$ are the new edges of the members of $B^*$, one for each. Let $N$ be their set, so that $R(B^*)=N$ and $|N|=b$. Since $A\cup P'$ is a packing in $G$, we have $a+m\le n$. The choice of $P'$ makes every packing in $G'$ with $m$ members, $b$ of them of type $2$, maximal in $G'$: adding a triangle would give a packing with at least $b$ type-2 members and $m+1$ members.

Let $B(P')$ be the set of old edges of $P'$, and put
\[
 \Sfam=\F(G'-B(P')),
\]
the family of type-2 triangles of $G'$ whose old edges avoid $E(P')$. Since $B(B^*)\subseteq B(P')$, we have $\Sfam\subseteq(\F_2)_{B^*}=\F(G'-B(B^*))$, and since $B^*$ is a maximum packing in $\F_2$, the red edge of every member of $\Sfam$ lies in $R(B^*)=N$. Choose a maximum packing $Q$ in $\Sfam$, put $c=|Q|$, and let $h$ be the number of members of $Q$ whose red edge is private in $\Sfam$.

The second transversal combines books formed by $B^*$ and $Q$ with a random bipartition. Pair each member of $Q$ with the member of $B^*$ containing its red edge. No member of $B^*$ is paired twice, as $Q$ is a packing. These pairs and the unpaired members of $B^*$ form $b$ books, whose supports are edge-disjoint since the old edges of $Q$ avoid $E(P')$. Every selection of one page per book consists of $b$ type-2 triangles and is therefore a maximum packing in $\F_2$. Let $W$ be the set of old edges of the one-page books, so $|W|=2(b-c)$. The set $C$ of \Cref{lem:books} for these books lies in $W\cup N$, since spines are edges of $N$ and a one-page book has its old edges in $W$ and its new edge in $N$.

Assign each vertex of $G'$ a bit, independently and uniformly at random, and call an edge crossing if its ends receive different bits. Select $W$ and every non-crossing edge of $O'$, then add the new edge of each triangle of $G'$ not yet met. A type-3 triangle has two vertices with the same bit and hence a non-crossing old edge, so the selected edges meet every triangle of $G'$. Their expected number is $|W|+\frac12(|O'|-|W|)$ plus the expected number of added new edges, which we bound in two parts.

A new edge is added only if both old edges of its triangle are crossing, and then it is non-crossing. Each edge of $N$ is non-crossing with probability $\frac12$, so the expected number of added edges in $N$ is at most $\frac b2$.

Now let an edge $xy\notin N$ be added as the new edge of a type-2 triangle $T$ not yet met. Then $T$ contains no edge of $N$ and avoids $W$, hence avoids $C$, and \Cref{lem:books} for these books and $\F_2$ shows that $T$ is $uxy$ or $vxy$ for a two-page book $\{uvx,uvy\}$ with $uvx\in B^*$ and $uvy\in Q$. The two edges that $T$ shares with the pages are old, so its new edge $xy$ is the opposite edge of the book. Since $uvy\in\Sfam\subseteq(\F_2)_{B^*}$ and $xy$ is a red edge outside $R(B^*)=N$, \Cref{lem:unique} shows that $uv$ is private in $(\F_2)_{B^*}$, hence in $\Sfam$, so $uvy$ is one of the $h$ members of $Q$ with a private red edge. Such a book contributes an added edge only if $uxy$ or $vxy$ is not yet met, which requires $x$ and $y$ to receive the same bit and $u$ or $v$ a different one. As $u,v,x,y$ are distinct, this has probability $\frac12-\frac18=\frac38$. Distinct added edges come from distinct books, as each book has one opposite edge, so the expected number of added edges outside $N$ is at most $\frac38h$. The expected total is at most
\[
 |W|+\frac{|O'|-|W|}{2}+\frac b2+\frac38h
 =\frac{|O'|}{2}+\frac32b-c+\frac38h,
\]
so some assignment of bits gives a transversal of $G'$ of at most this size. Adding $E(A)$ and using $|O'|=3n-a$ gives
\begin{equation}
 \tau(G)\le\tfrac32n+\tfrac52a+\tfrac32b-c+\tfrac38h.\label{eq:I2}
\end{equation}

For the third transversal, we add a third page to some of these books and include the remaining members of $P'$. Choose a maximum packing $Q_1$ in $\Sfam_Q=\F(G'-B(P')-B(Q))$ and put $d=|Q_1|$. The red edge of each member of $Q_1$ lies in $R(Q)$, and distinct members of $Q_1$ have distinct red edges. Add each member of $Q_1$ as a third page to the book whose spine is its red edge, and add the members of $P'\setminus B^*$ as one-page books. The old edges of $Q_1$ avoid $E(P')\cup E(Q)$, so the supports remain edge-disjoint. Every selection of one page per book is a packing in $G'$ with $m$ members, $b$ of them of type $2$, hence maximal in $G'$. These $m$ books have $m+c+d$ pages, so \Cref{lem:books} gives a transversal of $G'$ of size at most $3m-c-d$. Adding $E(A)$ and using $a+m\le n$ gives
\begin{equation}
 \tau(G)\le3n-c-d.\label{eq:I3}
\end{equation}

The fourth transversal consists of $E(A)$, the old edges of $P'$, and a transversal of $\Sfam$. The first two sets have at most $3a+3m-b\le3n-b$ edges. A triangle avoiding them lies in $G'-B(P')$ and is not of type $3$, since a type-3 triangle avoiding $B(P')$ would avoid $E(P')$ and could be added to $P'$. So it belongs to $\Sfam$. \Cref{lem:colored} applied to $G'-B(P')$, with $Q$, $Q_1$, $c$, $d$, $h$ in the roles of $P$, $Q$, $p$, $q$, $u$, gives $\tau(\Sfam)\le\frac94c+\frac32d-\frac14h$ by \Cref{eq:colored-refined}, so
\begin{equation}
 \tau(G)\le3n-b+\tfrac94c+\tfrac32d-\tfrac14h.\label{eq:I4}
\end{equation}

Multiply \Cref{eq:I1,eq:I2,eq:I3,eq:I4} by $20,8,19,12$, respectively, and add. The coefficients of $a,b,c,h$ cancel, giving
\[
 59\tau(G)\le165n-d\le165\nu(G).\qedhere
\]
\end{proof}

\begin{remark}
The weights are best possible for the system \Cref{eq:I1,eq:I2,eq:I3,eq:I4}. For $a=c=12n/59$, $b=m=39n/59$, and $d=h=0$, which satisfy the relations $a+m\le n$, $d\le c\le b\le m$, and $h\le c$ that hold in the proof, all four right-hand sides equal $165n/59$. So no nonnegative combination of the four inequalities gives a smaller constant.
\end{remark}

\end{document}